\newtheorem{theorem}{Theorem}[section]
\newtheorem{lemma}[theorem]{Lemma}
\theoremstyle{definition}
\newtheorem{remark}[theorem]{Remark}
\newcommand{\bea}{\begin{eqnarray*}}
\newcommand{\eea}{\end{eqnarray*}}
\numberwithin{equation}{section}
\begin{document}

\author{J. E. Forn\ae ss}
\author{Erlend F. Wold}

\title[]{An estimate for the Squeezing function and estimates of invariant metrics}
%
%
\subjclass[2000]{}
\date{\today}
\keywords{}

\begin{abstract}
We give estimates for the squeezing function on strictly pseudoconvex domains, and derive 
some sharp estimates for the Carath\'eodory, Sibony and Azukawa metrics near their boundaries.

\end{abstract}

\maketitle


\section{Introduction}

Let $\Omega $ be a bounded domain in $\mathbb C^n.$ The squeezing function \cite{DengGuanZhang} measures how much a domain looks like the unit ball
observed from a given point $z$.   More precisely it is defined as follows:  For a given 
injective holomorphic map $f:\Omega\rightarrow\mathbb B^n$ satisfying 
$f(z)=0$ we set 
$$
S_{\Omega,f}(z):={\mathrm{sup}}\{r>0:r\mathbb B^n\subset f(\Omega)\},
$$
and then we set 
$$
S_\Omega(z):=\underset{f}{\mathrm{sup}}\{S_{\Omega,f}(z)\},
$$
where $f$ ranges over all injective holomorphic maps $f:\Omega\rightarrow\mathbb B^n$
with $f(z)=0$.  It was proved in \cite{DengGuanZhang} that 
$$
\underset{z\rightarrow b\Omega}{\lim}S_\Omega(z)=1
$$
if $\Omega$ is a $\mathcal C^2$-smooth strictly pseudoconvex domain, and 
it was proved in \cite{KimZhang} that the squeezing function is bounded on any bounded 
convex domain.    Our goal 
is to improve this estimate when the boundary has higher regularity, and to 
give an application to invariant metrics.  

\begin{theorem}\label{squeezing}
Let $\Omega=\{\delta<0\} \subset\mathbb C^n$ be a strictly pseudoconvex domain with a defining function $\delta$ of class $\mathcal C^k$ for 
$k\geq 3$.  
The squeezing function $S_\Omega(z)$ for $\Omega$
satisfies the estimate 
$$
S_\Omega(z)\geq 1- C\cdot\sqrt{|\delta(z)|}
$$
for a fixed constant $C$.
If we even have  $k\geq 4$, then
there exists a constant $C>0$ such that the squeezing function $S_\Omega(z)$ for $\Omega$
satisfies 
$$
S_\Omega(z)\geq 1- C\cdot|\delta(z)|
$$
for all $z$ 
\end{theorem}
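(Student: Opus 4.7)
The plan is to construct, for each $z\in\Omega$ sufficiently close to $b\Omega$, an explicit injective holomorphic map $f_z:\Omega\to\mathbb{B}^n$ satisfying $f_z(z)=0$ and $f_z(\Omega)\supset(1-C\epsilon)\mathbb{B}^n$, with $\epsilon=\sqrt{|\delta(z)|}$ (resp.\ $|\delta(z)|$) in the $\mathcal C^3$ (resp.\ $\mathcal C^4$) case. The key ingredients are a polynomial change of coordinates bringing $\delta$ into a sharp Siegel normal form at the closest boundary point, the non-isotropic dilation adapted to the Heisenberg structure, and the Cayley transform from the Siegel half-space to the ball.

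For $z$ sufficiently near $b\Omega$, let $p=p(z)$ denote the closest boundary point and choose holomorphic coordinates at $p$ so that the outer normal lies along the positive $\mathrm{Re}(w_n)$-direction and the Levi form on the complex tangent becomes $|w'|^2$, yielding $\delta(w)=2\mathrm{Re}(w_n)+|w'|^2+O(\|w\|^3)$. A further polynomial biholomorphism $w_n\mapsto w_n+h_3(w)$, $w'\mapsto w'+q_2(w)$ with $h_3$ holomorphic cubic and $q_2$ holomorphic quadratic absorbs all the purely tangential cubic terms of $\delta$ (strict pseudoconvexity makes the corresponding linear system for the coefficients of $q_2$ invertible), producing coordinates in which $\delta\circ\Phi^{-1}(w)=2\mathrm{Re}(w_n)+|w'|^2+E(w)$, with $E(w)=o(\|w\|^3)$ when $k=3$ and $E(w)=O(\|w\|^4)$ (modulo the residual Chern--Moser quartic invariants) when $k\geq 4$. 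A final small shift in the $w_n$-direction places $\Phi(z)$ at $(0,-\delta_0)$, where $\delta_0:=|\delta(z)|$.

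Apply the anisotropic dilation $\Lambda^{-1}_{\delta_0}(w',w_n)=(w'/\sqrt{\delta_0},w_n/\delta_0)$, which maps $\Phi(z)$ to $(0,-1)$ and preserves the Siegel half-space $H=\{2\mathrm{Re}(w_n)+|w'|^2<0\}$. In the rescaled coordinates the defining function of $\Omega$ reads $\delta_0\rho_H(w)+E(\Lambda_{\delta_0}w)$, where $\rho_H(w):=2\mathrm{Re}(w_n)+|w'|^2$. Compose with the Cayley transform $C:H\to\mathbb B^n$; since $C(0,-1)=0$, the candidate $f_z:=C\circ\Lambda^{-1}_{\delta_0}\circ\Phi$ satisfies $f_z(z)=0$. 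The squeezing estimate reduces to the inclusion $C^{-1}((1-\epsilon)\mathbb B^n)\subset\Lambda^{-1}_{\delta_0}(\Phi(\Omega))$, i.e.\ $\delta_0\rho_H(w)+E(\Lambda_{\delta_0}w)<0$ on this pullback. On its boundary one has $-\delta_0\rho_H(w)\gtrsim\delta_0\epsilon$, while the Taylor control on $E$ combined with the sizes $|w'|\lesssim 1/\sqrt{\epsilon}$, $|w_n|\lesssim 1/\epsilon$ on $C^{-1}((1-\epsilon)\mathbb B^n)$ gives $|E(\Lambda_{\delta_0}w)|\ll\delta_0\epsilon$ precisely when $\epsilon\geq C_0\sqrt{\delta_0}$ (resp.\ $\epsilon\geq C_0\delta_0$).

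The principal obstacle is the globalization of $f_z$ to an injective map defined on all of $\Omega$ and valued in $\mathbb B^n$: as written it is only a priori holomorphic where $\Phi$ is a biholomorphism and where $\Lambda^{-1}_{\delta_0}\circ\Phi$ avoids the Cayley pole $\{w_n=1\}$. I would handle this by shrinking the dilation slightly (using $\delta_0/t$ with $t=1-O(\epsilon)$), post-composing with a small Möbius automorphism of $\mathbb B^n$ to recenter the image of $z$ at the origin, and combining with a cut-off via a bounded holomorphic peak function at $p$ (whose existence follows from strict pseudoconvexity) that drives the image of $\Omega\setminus U$ into a small region near a boundary point of $\mathbb B^n$. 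Each of these modifications costs only $O(\epsilon)$ in the inscribed radius, and the cumulative loss is absorbed into the final constant without spoiling the claimed rate.
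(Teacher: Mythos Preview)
Your local scaling picture is sound, but the globalization step is a genuine gap, not a technicality. The map $f_z=C\circ\Lambda^{-1}_{\delta_0}\circ\Phi$ is only defined on the neighbourhood of $p$ where the polynomial chart $\Phi$ makes sense, and even there $\Phi(\Omega)$ need not lie in the Siegel half-space, so the Cayley transform need not land in $\mathbb{B}^n$. One cannot repair this by ``combining with a cut-off via a bounded holomorphic peak function'': holomorphic maps admit no cut-offs, and multiplying by (a power of) a peak function destroys injectivity and will not simultaneously keep the image inside $\mathbb{B}^n$ and make it contain a ball of radius $1-O(\epsilon)$. Producing a \emph{global} injective holomorphic map $\overline\Omega\to\overline{\mathbb{B}^n}$ that osculates the ball to the required order at a prescribed boundary point is exactly the content of the exposing-of-points theorem of Diederich--Forn\ae ss--Wold, which uses Anders\'en--Lempert theory and is not a soft consequence of strict pseudoconvexity. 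The paper invokes precisely this result to obtain an embedding $\phi:\overline\Omega\hookrightarrow\overline{\mathbb{B}^n}$ with $\phi(p)$ the unique contact point and a controlled local defining function, and then works with explicit M\"obius automorphisms $\phi_r$ of $\mathbb{B}^n$ (in place of your Heisenberg dilation) together with internally tangent ellipsoids to run both the ``how far outside'' and the ``largest inscribed ball'' estimates.

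There is also a quantitative problem in your sketch. With only the bounds $|w'|\lesssim\epsilon^{-1/2}$, $|w_n|\lesssim\epsilon^{-1}$ on $C^{-1}((1-\epsilon)\mathbb{B}^n)$ and the lower bound $-\rho_H(w)\gtrsim\epsilon$ that you quote, the inequality $|E(\Lambda_{\delta_0}w)|\ll\delta_0\epsilon$ does \emph{not} follow for the claimed $\epsilon$: in the $\mathcal C^4$ case one gets $\|\Lambda_{\delta_0}w\|^2\lesssim\delta_0/\epsilon$, hence $|E|\lesssim(\delta_0/\epsilon)^2$, and $(\delta_0/\epsilon)^2\le\delta_0\epsilon$ forces $\epsilon\gtrsim\delta_0^{1/3}$, not $\epsilon\gtrsim\delta_0$; the $\mathcal C^3$ case is similarly off. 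The paper avoids this by observing that on the \emph{boundary} of the normalized domain one has $\|z'\|^2\lesssim|z_1-1|$, so the $O(\|z\|^k)$ remainder is in fact $O(|z_1-1|^{k/2})$ there; this anisotropic improvement is what produces the exponent $(k-2)/2$ in the outside estimate and is essential for the rates $|\delta|^{1/2}$ and $|\delta|$. Your error analysis would need a comparable refinement---tracking separately the $w'$- and $w_n$-contributions to $E$ and using the sharper horosphere bound $-\rho_H(w)\sim\epsilon\,|1-w_n|^2$---before it could yield the stated powers.
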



Combining with a theorem due to D. Ma \cite{Ma} and 
a result of Deng, Guan and Zhang \cite{DengGuanZhang}, 
an immediate consequence is a sharp estimate 
for invariant metrics near the boundary of a
strictly pseudoconvex domain.   Before we state the result, 
we briefly recall the definitions of some invariant metrics. Let $\Delta$ denote the unit disc, and let $\mathcal O(M,N)$ denote the holomorphic maps from $M$ to $N.$ 

\begin{itemize}
\item Kobayashi metric $K_\Omega(p,\xi).$ We define
$$
K_\Omega(p,\xi)=\inf \{|\alpha|; \exists f\in \mathcal O(\Delta,\Omega)\; f(0)=p, \alpha f'(0)= \xi \}.
$$

\item Carath\'eodory metric $C_\Omega(p,\xi).$ We define
$$
C_\Omega(p,\xi)=\sup \{|f'(p)(\xi)|; \exists f\in \mathcal O(\Omega,\Delta)\; f(p)=0\}.
$$

\item Sibony metric $S_\Omega(p,\xi).$ We define

$S_\Omega(p,\xi)=\sup \{(\sum_{i,j} \frac{\partial^2 u(p)}{\partial z_i\partial \overline{z}_j}\xi_i\overline{\xi}_j)^{1/2},
u(p)=0, 0\leq u<1,$
 $u$ is $\mathcal C^2$ near $p$ and $\ln u$ is plurisubharmonic in $\Omega$\}.

\item Azukawa metric $A_U(p,\xi).$ We define
$$
A_\Omega(p,\xi)=\sup_{u\in P_\Omega(p)} \{\limsup_{\lambda \searrow 0} \frac{1}{|\lambda| }u(p+\lambda \xi )\}
$$ 
where
\bea
P_\Omega(p) & =  & \{u:\Omega\rightarrow [0,1), \ln u \; {\mbox{is plurisubharmonic and}} \\
&  \exists & M_u>0,
  r_u>0 \;
{\mbox{such that}} \\
& &  \mathbb B^n(p,r)\subset \Omega, u(z)\leq M\|z-p\|, z\in \mathbb B^n(p,r)\}
\eea
\end{itemize}

\begin{theorem}\label{metrics}  
Let $\Omega\subset\mathbb C^n$ be a strictly pseudoconvex domain of class $\mathcal C^3$, 
let $p\in b\Omega$, and let $\delta$ be a defining function for $\Omega$ near $p$, 
such that $\|\nabla\delta(z)\|=1$ for all $z\in b\Omega$.  Then if $F_\Omega(z,\zeta)$
is either the Carath\'eodory, Sibony or Azukawa metric, there exists a constant $C>0$
such that 
\begin{align*}
& (1-C\sqrt{|\delta(z)|})\left[\frac{L_{\pi(z)}(\xi_T)}{|\delta(z)|} + \frac{\|\xi_N\|}{4\delta(z)^2}\right]^{1/2}\leq F_\Omega(z,\xi)\\
& \leq (1+C\sqrt{|\delta(z)|})\left[\frac{L_{\pi(z)}(\xi_T)}{|\delta(z)|} + \frac{\|\xi_N\|}{4\delta(z)^2}\right]^{1/2}
\end{align*}
for all $z$ near $p$, and all $\xi=\xi_N+\xi_T$, where $\pi$ is the orthogonal projection
to $b\Omega$, $\xi_N$ is the complex normal component of $\xi$ at $\pi(z)$ and $\xi_T$ is the complex tangential component, and $L$ is the Levi form of $\delta$. 
\end{theorem}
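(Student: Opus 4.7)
The proof combines three inputs: the squeezing bound of Theorem \ref{squeezing}, the comparison principle of Deng--Guan--Zhang converting squeezing estimates into two-sided bounds on invariant metrics, and D.~Ma's sharp asymptotic for the Kobayashi metric near a strictly pseudoconvex boundary.

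Fix $z\in\Omega$ near $b\Omega$ and pick a near-optimal squeezing map $f:\Omega\hookrightarrow\mathbb B^n$ with $f(z)=0$ and $r\mathbb B^n\subset f(\Omega)\subset\mathbb B^n$, where $r$ can be taken arbitrarily close to $S_\Omega(z)$. Each of the four metrics $F\in\{C_\Omega,S_\Omega,A_\Omega,K_\Omega\}$ is biholomorphically invariant and decreases when the domain is enlarged; at the origin of $\mathbb B^n$ all four reduce to the Euclidean norm (for $K_\Omega$ by Schwarz, for $C_\Omega$ by the obvious coordinate functionals, and for $S_\Omega,A_\Omega$ by testing against $u(z)=\|z\|^2$); and a linear rescaling yields $F_{r\mathbb B^n}(0,v)=\|v\|/r$. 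Writing $F_\Omega(z,\xi)=F_{f(\Omega)}(0,df_z(\xi))$ and applying monotonicity under the two inclusions $r\mathbb B^n\subset f(\Omega)\subset\mathbb B^n$ gives
\[
\|df_z(\xi)\|\;\le\;F_\Omega(z,\xi)\;\le\;\|df_z(\xi)\|/r,
\]
together with the same double inequality for $K_\Omega(z,\xi)$. Eliminating $\|df_z(\xi)\|$ between these produces the key comparison
\[
r\cdot K_\Omega(z,\xi)\;\le\;F_\Omega(z,\xi)\;\le\;K_\Omega(z,\xi)/r.
\]

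Letting $r\nearrow S_\Omega(z)$ and inserting the $\mathcal C^3$-case of Theorem \ref{squeezing} yields $F_\Omega(z,\xi)=(1+O(\sqrt{|\delta(z)|}))K_\Omega(z,\xi)$. Ma's theorem supplies an asymptotic of precisely the same form,
\[
K_\Omega(z,\xi)=(1+O(\sqrt{|\delta(z)|}))\left[\frac{L_{\pi(z)}(\xi_T)}{|\delta(z)|}+\frac{\|\xi_N\|}{4\delta(z)^2}\right]^{1/2},
\]
valid under the normalization $\|\nabla\delta\|=1$ on $b\Omega$ assumed in the theorem. Multiplying the two relative errors and absorbing constants into $C$ gives the asserted two-sided bounds.

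The one substantive point requiring verification is the squeezing comparison above for the Sibony and Azukawa metrics, since \cite{DengGuanZhang} state their comparison primarily for the Carath\'eodory, Kobayashi and Bergman metrics. This reduces to checking biholomorphic invariance and holomorphic monotonicity directly from the definitions: the pullback of a plurisubharmonic function by a holomorphic map remains plurisubharmonic, and the auxiliary constraints (values in $[0,1)$, vanishing at the base point, and the local majoration by Euclidean distance appearing in the definition of $A_\Omega$) are all preserved. The remaining bookkeeping---tracking the error terms and using $1/r=1+O(\sqrt{|\delta(z)|})$ for $r$ close to $1$---is routine.
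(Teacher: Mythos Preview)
Your argument is correct and follows the same route as the paper: the comparison principle of Lemma~\ref{compare} (squeezing bounds sandwich $F_\Omega$ between multiples of $K_\Omega$), then Theorem~\ref{squeezing} and Ma's Kobayashi estimate. The only cosmetic difference is that the paper uses the well-known inequality $F_\Omega\le K_\Omega$ directly for the upper bound, whereas you extract the slightly weaker $F_\Omega\le K_\Omega/r$ from the squeezing inclusions; since $1/r=1+O(\sqrt{|\delta|})$ this makes no difference to the conclusion.
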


Ma's result is the corresponding statement for the Kobayashi metric, and the result is sharp in the sense that one cannot in general do 
better than the square root of the boundary distance.


\medskip

%
%

\section{Proof of Theorem \ref{metrics}}\label{metricsection}

The following was proved in \cite{DengGuanZhang}, and we include the proof for 
the benefit of the reader. 
\begin{lemma}\label{compare}
Let $\Omega$ be any bounded domain in $\mathbb C^n$, and let $F_\Omega(z,\xi)$ be either the Carath\'eodory, Sibony or Azukawa metric.  
Then 
$$
S_\Omega(z)K_\Omega(z,\xi)\leq F_\Omega(z,\xi)\leq K_\Omega(z,\xi)
$$
for all $z\in\Omega$ and all $\xi\in\mathbb C^n$, where $K$ denotes the Kobayashi metric.
\end{lemma}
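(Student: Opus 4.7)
The plan is to prove the two inequalities separately. The upper bound $F_\Omega(z,\xi)\leq K_\Omega(z,\xi)$ is a classical comparison result for each of the three metrics. For Carath\'eodory this follows from Schwarz's lemma: composing an outgoing map $f\in\mathcal O(\Omega,\Delta)$ with $f(p)=0$ and an incoming map $g\in\mathcal O(\Delta,\Omega)$ with $g(0)=p$ gives a self-map of the disc fixing $0$, so $|f'(p)\alpha g'(0)|\leq 1$ whenever $\alpha g'(0)=\xi$. For the Sibony and Azukawa metrics the same inequality is standard (in fact one has the full chain $C_\Omega\leq S_\Omega\leq A_\Omega\leq K_\Omega$), and I would either quote this or verify it by pulling back the defining plurisubharmonic function along an arbitrary disc $g\in\mathcal O(\Delta,\Omega)$ with $g(0)=p$ to produce a competitor on $\Delta$.

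For the lower bound the key input is the squeezing construction. Fix $\varepsilon>0$ and choose an injective holomorphic $f:\Omega\to\mathbb B^n$ with $f(z)=0$ and $r\mathbb B^n\subset f(\Omega)$ for some $r>S_\Omega(z)-\varepsilon$. The map $g:=f^{-1}|_{r\mathbb B^n}:r\mathbb B^n\to\Omega$ is then a well-defined holomorphic map sending $0$ to $z$, with $g'(0)=f'(z)^{-1}$. The decreasing property of the Kobayashi metric applied to $g$ yields
$$
K_\Omega(z,\xi)\leq K_{r\mathbb B^n}(0,f'(z)\xi)=\tfrac{1}{r}\,\|f'(z)\xi\|.
$$
On the other hand, the decreasing property of $F_\Omega$ applied to $f:\Omega\to\mathbb B^n$, together with the identity $F_{\mathbb B^n}(0,\eta)=\|\eta\|$, gives
$$
F_\Omega(z,\xi)\geq F_{\mathbb B^n}(0,f'(z)\xi)=\|f'(z)\xi\|.
$$
Combining these two inequalities eliminates $f'(z)\xi$ and produces $r\,K_\Omega(z,\xi)\leq F_\Omega(z,\xi)$, after which letting $\varepsilon\to 0$ yields the claim.

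There is no real obstacle; the main subtlety is simply bookkeeping of which monotonicity direction applies to which metric and verifying that $F_{\mathbb B^n}(0,\eta)=\|\eta\|$ for each of the three $F$'s. For Carath\'eodory this is witnessed by a linear functional; for Sibony by $u(z)=\|z\|^2$ (noting $\log u$ is plurisubharmonic on $\mathbb B^n$ and the Levi form at $0$ is the identity); and for Azukawa by $u(z)=\|z\|$. With these normalizations in place the two-sided estimate falls out from the squeezing definition directly.
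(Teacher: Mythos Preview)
Your argument is correct and is essentially the same as the paper's: both use an (almost) extremal squeezing map $f:\Omega\to\mathbb B^n$ with $f(z)=0$, combine the decreasing property of $F$ under $f$ with the decreasing property of $K$ under $f^{-1}|_{r\mathbb B^n}$, and invoke $F_{\mathbb B^n}(0,\cdot)=K_{\mathbb B^n}(0,\cdot)$ at the origin. The only cosmetic differences are that the paper takes an exact extremal $f$ (citing its existence and noting that ``alternatively one can use a limiting argument,'' which is precisely your $\varepsilon\to 0$), and that the paper writes the chain via biholomorphic invariance plus domain monotonicity rather than the equivalent direct decreasing property you use.
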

\begin{proof}
It is well known that $K$ dominates $F$ so we need to show the lower estimate.  
Let $f:\Omega\rightarrow\mathbb B^n$ be injective holomorphic with $f(z)=0$, such that 
$B_{r}\subset f(\Omega)$ where $r=S_\Omega(z)$.  For the existence of $f$ see \cite{DengGuanZhang} (alternatively one can use a limiting argument).
We get that 
\begin{align*}
F_\Omega(z,\xi) & = F_{f(\Omega)}(0,f_*\xi)\geq F_{\mathbb B^n}(0,f_*\xi)=K_{\mathbb B^n}(0,f_*\xi)\\
& = S_{\Omega}(z) K_{B_r}(0,f_*\xi)\geq S_\Omega(z)K_{f(\Omega)}(0,f_*\xi)=S_\Omega(z)K_\Omega(z,\xi).
\end{align*}
\end{proof}

\emph{Proof of Theorem \ref{metrics}:}
By Lemma \ref{compare} we have that 
$$
S_\Omega(z)K_\Omega(z,\xi)\leq F_\Omega(z,\xi)\leq K_\Omega(z,\xi)
$$
Then combining Theorem \ref{squeezing} with the fact that Theorem \ref{metrics} holds with $F_\Omega(z)$ replaced
by $K_\Omega(z)$ (see \cite{Ma}) completes the proof.
$\hfill{\square}$


\section{Proof of Theorem \ref{squeezing}}

The following provides the key geometric setup for the proof. Let $k=3$ or $4$, and
let $\Omega$ be a bounded strongly pseudoconvex domain of class $\mathcal C^k.$ 

\begin{lemma}\label{exposedpoint}
Let $p\in b\Omega$.  There exists an injective holomorphic map $\phi:\overline\Omega\rightarrow\mathbb C^n$
such that $\tilde\Omega=\phi(\Omega)$ satisfies the following: 
\begin{itemize}
\item[(i)] $\tilde\Omega\subset\mathbb B^n$, 
\item[(ii)] $\phi(p)=(1,0,\cdot\cdot\cdot,0)=:a$ and $\phi^{-1}(b\mathbb B^n)=\{p\}$, 
\item[(iii)] near a we have that, $\tilde\Omega=\{\rho<\mu^2\}, 0<\mu<1$ where  
$$
\rho(z)=|z_1-(1-\mu)|^2+\|z'\|^2 +  O(|z_1-1|^2) + O(\|z-a\|^k).
$$
\end{itemize}
\end{lemma}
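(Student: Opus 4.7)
The plan is to combine a local holomorphic normalization of $b\Omega$ near $p$ with a global ``exposing'' construction that turns the local picture into an injection of $\overline\Omega$ into $\mathbb B^n$ touching the sphere only at the image of $p$.

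First I would bring the boundary into a local holomorphic normal form near $p$. After translating $p$ to the origin, rotating so that the outward real normal is the $\mathrm{Re}(w_1)$ direction, diagonalizing the Levi form by a unitary change of the complex-tangential coordinates, rescaling each axis so that the Hermitian part becomes $\|w\|^2$, and finally absorbing the purely holomorphic quadratic terms by a shear $w_1 \mapsto w_1 + Q(w)$ with $Q$ a degree-$2$ polynomial, the defining function takes the shape
\[
\delta(w) \;=\; 2\,\mathrm{Re}(w_1) + \|w\|^2 + O(\|w\|^k).
\]
Thus the level set $\{\delta = 0\}$ is, modulo $O(\|w\|^k)$, the unit sphere centered at $-e_1$, internally tangent to $\{\mathrm{Re}(w_1) = 0\}$ at the origin.

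Next I would apply the affine map $T_\mu(w) = \mu w + a$, with $a = (1,0,\ldots,0)$ and $\mu \in (0,1)$ a small parameter to be chosen. Writing $z = T_\mu(w)$, expanding $\mu^2 \delta(T_\mu^{-1}(z))$ and completing the square in $z_1$ yields
\[
\mu^2\, \delta(T_\mu^{-1}(z)) \;=\; |z_1 - (1-\mu)|^2 + \|z'\|^2 - \mu^2 + O(\|z-a\|^k),
\]
so the local picture near $a$ is exactly of the form $\{\rho < \mu^2\}$ claimed in~(iii); any residual corrections that survive the $\mathcal C^k$ normalization (for instance non-absorbable non-pluriharmonic cubic or quartic terms) contribute only to the $O(|z_1-1|^2)$ slot, which controls deviations in the normal direction.

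The main obstacle is producing a \emph{global} injective holomorphic map $\phi : \overline\Omega \to \mathbb C^n$ that agrees with $T_\mu \circ \psi$ (where $\psi$ denotes the local normal-form coordinate change) to the required order at $p$, while simultaneously satisfying $\phi(\overline\Omega \setminus \{p\}) \subset \mathbb B^n$. For this I would invoke the exposing-boundary-point technique for strictly pseudoconvex domains developed by Diederich--Forn\ae ss--Wold and Forstneri\v{c}: exploiting the Stein structure of $\Omega$ together with an Andersen--Lempert-type approximation, one extends the local biholomorphism to a holomorphic injection of $\overline\Omega$ into $\mathbb C^n$ matching the local model to order $k$ at $p$, and then composes with a suitable automorphism of $\mathbb C^n$ that acts as a strong contraction away from $p$. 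Strict pseudoconvexity enters essentially here: it ensures both that the local normal form is genuinely spherical to order $k$, and that for $\mu$ small enough the local spherical cap near $a$ fits strictly inside $\mathbb B^n$ while the contracted bulk of $\phi(\Omega)$ stays at positive distance from $b\mathbb B^n$, so that $a$ is the unique boundary contact point.
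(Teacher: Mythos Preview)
Your outline is essentially the paper's own argument: put the boundary into the local normal form $2\,\mathrm{Re}(z_1)+\|z\|^2+O(\|z\|^k)$, invoke the Diederich--Forn\ae ss--Wold exposing theorem, then scale by a small $\mu$ and translate to the point $a=(1,0,\dots,0)$.

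Two imprecisions are worth flagging. First, the exposing step in \cite{DiederichFornaessWold} is not an automorphism of $\mathbb C^n$; after the Anders\'en--Lempert automorphisms (which can be made to match the identity at $p$ to any order) one composes with a map of the special form $(z_1,z')\mapsto (f(z_1),z')$ with $f$ a one-variable injective holomorphic function, $f'(0)>0$. It is precisely the higher Taylor coefficients of $f$ that create the $O(|z_1-1|^2)$ term in (iii), after the linear rescaling $z_1\mapsto z_1/c_1$. Your attribution of this term to ``residual corrections that survive the $\mathcal C^k$ normalization'' is not accurate: the local normalization already kills everything below order $k$, and the $O(|z_1-1|^2)$ enters only through the exposing map acting in the $z_1$-variable. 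Second, in the paper the scaling by $\mu$ is applied \emph{after} the exposing map (it is what forces the image into $\mathbb B^n$), not before; this is why the translated defining function picks up $2\mu\,\mathrm{Re}(z_1-1)$ rather than $2\,\mathrm{Re}(z_1-1)$.
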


\begin{proof}
By the main theorem in \cite{DiederichFornaessWold} there exists a map $\phi$
such that (i) and (ii) are satisfied.  That we can achieve (iii) 
follows from the proof which 
consists of three steps.  We first apply an automorphism of $\mathbb C^n$ 
to ensure that, locally near $p=0$, our domain has a defining function 
\begin{equation}\label{localform1}
\rho(z)=2Re(z_1) + \|z\|^2 + O(\|z\|^k).
\end{equation}
To achieve this one approximates a local map with jet interpolation using 
the Anders\'{e}n-Lempert theory.  
We next apply another automorphism of $\mathbb C^n$ which can be chosen 
to match the identity at the origin to any given order, so we still have a defining 
function of the form \eqref{localform1}.  The final exposing map 
is of the form $\varphi=\phi\circ\alpha$, where $\phi(z)=(f(z_1),z_2,...,z_n)$
where $f$ is injective holomorphic with $f'(0)>0$, 
and $\alpha(z)$ can be chosen to match the identity 
to any given order 
at the origin.  By a translation we assume that $\varphi(0)=0$.  We then have 
a defining function for $\varphi(\Omega)$ of the form 
\begin{align*}
\rho(z) & =2Re(c_1z_1 + c_2 z_1^2 + c_3z_1^3) + |c_1|^2|z_1|^2 + \|z'\|^2 + O(|z_1|^2) +O(\|z\|^k) \\
& = 2c_1Re(z_1) + |c_1|^2|z_1|^2 + \|z'\|^2 + O(|z_1|^2) + O(\|z\|^k).
\end{align*}
Applying the linear change of coordinates $(z_1,z')\mapsto (z_1/c_1,z')$, we get a defining function
$$
\rho(z)=2Re(z_1) + |z_1|^2 + \|z'\|^2 + O(|z_1|^2) + O(\|z\|^k).
$$
By chosing a small $0<\mu<1$ we have that $\mu\varphi(\Omega)$ is contained 
in the translated unit ball $\{2Re(z_1)+\|z\|^2<0\}$, with defining function 
$$
\rho(z)=2\mu Re(z_1) + |z_1|^2 + \|z'\|^2 + O(|z_1|^2) + O(\|z\|^k), 
$$
which is the same as (iii) when translated $(z_1,z')\mapsto (z_1+1,z')$.
\end{proof}

\begin{remark}
On $b\tilde\Omega$ the remainder term in (iii) is actually $O(|z_1-1|^{k/2})$.
To see this we first translate $\tilde\Omega$ to the origin, set $\tilde{z}_1=z_1-1,\tilde{z}=(\tilde{z}_1,z')$ so that 
it is defined by 
$$
\tilde\rho(\tilde{z})=2Re(\tilde{z}_1)+|\tilde{z}_1|^2+\frac{1}{\mu}\|z'\|^2 + O(|\tilde z_1|^2) +O(\|\tilde{z}\|^k)<0.
$$
We estimate $\|z'\|$ on $\tilde\rho=0$.  If $\|z'\|\leq |\tilde{z}_1|$
the remainder term is less than $C|\tilde{z}_1|^k= O(|z_1|^{k/2})$.  
If $|\tilde{z}_1|\leq\|z'\|$ then the remainder term is $O(\|z'\|^k)$ and we get 
\begin{align*}
\|z'\|^2+ O(\|z'\|^k) & =\mu(-2Re(\tilde{z}_1) - |\tilde{z}_1|^2 + O(|\tilde z_1|^2)) \\
& =\mu |\tilde{z}_1|(-\frac{2Re(\tilde{z}_1)}{|\tilde{z}_1|} - |\tilde{z}_1| + \frac{O(|\tilde z_1|^2)}{|\tilde z_1|}).
\end{align*}
This implies that the remainder term is $\mathcal O(\|z'\|^k)=\mathcal O(|z_1-1|^{k/2}).$
\end{remark}

From now on we assume that $\Omega=\tilde{\Omega}$ and satisfies (i)-(iii) above.  Then 
$\Omega$ is "almost" contained in the ball $B_\mu\subset\mathbb B^n$
defined by 
$$
|z_1|^2 + \frac{1}{\mu}\|z'\|^2<1.
$$ 
We will use automorphisms of the ball $\mathbb B^n$ of the form 
$$
\phi_r(z_1,z')=\left(\frac{z_1-r}{1-rz_1},\frac{\sqrt{1-r^2}}{1-rz_1}z'\right).
$$
We have that $\phi_r$ leaves $B_\mu$ invariant.  To prove the 
theorem, we will estimate two things: 

\begin{itemize}
\item[(a)] How much  $\phi_r(\Omega)$ sticks out of $B_\mu$ and
\item[(b)]  the size of the largest ball 
in $B_\mu$ contained in $\phi_r(\Omega).$ 
\end{itemize}

\subsection{Estimate (a)}

\begin{lemma}\label{outsideestimate}  
There exists a constant $C>0$ such that for $w\in b\phi_r(\Omega)$ we have 
that $|w_1|^2+\frac{1}{\mu}\|w'\|^2\leq 1 + C(1-r)^{\frac{k-2}{2}}$.
\end{lemma}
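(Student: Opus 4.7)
The plan is to introduce the quantity $N(z) := |z_1|^2 + \frac{1}{\mu}\|z'\|^2$, so that $B_\mu = \{N<1\}$ and the lemma is exactly the one-sided estimate $N(w) - 1 \le C(1-r)^{(k-2)/2}$ for $w \in b\phi_r(\Omega)$. First I would verify, by a direct algebraic computation using the identity $|1-rz_1|^2 - |z_1-r|^2 = (1-r^2)(1-|z_1|^2)$, the transformation formula
$$
N(\phi_r(z)) - 1 \;=\; \frac{(1-r^2)\,(N(z)-1)}{|1-rz_1|^2}.
$$
In particular this re-confirms that $\phi_r$ preserves $B_\mu$, so only the \emph{positive} part of $N(z)-1$ on $b\Omega$ can contribute to an upper bound on $N(w)-1$.

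Next I would extract from (iii) and the Remark the one-sided boundary bound $N(z) - 1 = O(|z_1-1|^{k/2})$ for $z\in b\Omega$ near $a$: subtracting $1$ from both sides of $\rho = \mu^2$ and applying the remainder simplification in the Remark gives precisely this.

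The main algebraic step is then to switch to $w$-coordinates via $z = \phi_{-r}(w)$, which yields the clean identities
$$
z_1 - 1 \;=\; \frac{-(1-r)(1-w_1)}{1+rw_1}, \qquad 1 - rz_1 \;=\; \frac{1-r^2}{1+rw_1}.
$$
Substituting these into the transformation formula together with the boundary bound on $N(z)-1$ produces
$$
N(w) - 1 \;\le\; \frac{C\,(1-r)^{k/2}\,|1-w_1|^{k/2}\,|1+rw_1|^{2-k/2}}{1-r^2}.
$$
Because $w\in \overline{\mathbb B^n}$ forces $|w_1|\le 1$ (so $|1-w_1|$ and $|1+rw_1|$ are bounded by $2$), and because the exponent $2-k/2$ is nonnegative exactly for $k\in\{3,4\}$, every factor except $(1-r)^{k/2}/(1-r^2)=O((1-r)^{(k-2)/2})$ is uniformly bounded, which yields the desired estimate when $z$ is near $a$.

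For $z \in b\Omega$ bounded away from $a$ I would argue by compactness: since $\phi^{-1}(b\mathbb B^n) = \{p\}$ translates to $|z_1| \le 1-\delta_1$ uniformly on this set, one has $|1-rz_1| \ge r\delta_1$ while $N(z)-1$ is uniformly bounded above, so the transformation formula gives $N(w) - 1 = O(1-r)$, which for $k \in\{3,4\}$ is absorbed into $C(1-r)^{(k-2)/2}$. The main subtlety, and the reason the restriction $k\le 4$ is needed for this approach, lies in the interplay of the $|1+rw_1|$ factors: the power $|1+rw_1|^{2-k/2}$ arising from cancelling a $k/2$ from the remainder against a $2$ from the transformation formula only remains bounded when the exponent is $\ge 0$; otherwise it would blow up as $w_1 \to -1$, which is the regime occupied by most of $b\phi_r(\Omega)$ as $r\to 1$.
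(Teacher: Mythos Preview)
Your argument is correct and essentially the same as the paper's: both split into the regions $|z_1-1|\ge\eta$ and $|z_1-1|<\eta$, and in the latter combine the local boundary estimate $N(z)-1\le O(|z_1-1|^{k/2})$ with the identity $N(\phi_r(z))-1=(1-r^2)(N(z)-1)/|1-rz_1|^2$. The only cosmetic difference is that you pass to $w$-coordinates via $z=\phi_{-r}(w)$ for the final bound, whereas the paper stays in $z$-coordinates and uses the elementary inequalities $|z_1-1|\le 2|1-rz_1|$ and $|1-rz_1|\ge 1-r$ to reach the same $(1-r)^{(k-2)/2}$.
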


\begin{proof} 
We would like to express the maximum of the function $\|\phi_r(z)\|$
in terms of $(1-r)$ on $b\Omega$, \emph{i.e.}, we look at 
$$
\|\phi_r(z)\|^2=\frac{|z_1-r|^2 + \frac{1}{\mu}(1-r^2)\|z'\|^2}{|1-rz_1|^2}=\frac{|z_1-r|^2}{|1-rz_1|^2} + \frac{1}{\mu}\frac{(1-r^2)|z'|^2}{|1-rz_1|^2}
$$
for $z\in b\Omega$.  Fix any $\eta>0.$ We show first that if $z\in \mathbb B^n$ with $|z_1-1|>\eta,$
then we have a uniform estimate
$$
\|\phi_r(z)\|^2\leq 1+C(1-r).
$$

In this case we have that the denominator of the second term stays bounded independent of $r$, while 
$|z'|\leq 1$, hence the term goes to zero like $(1-r)$.  For the other term we write 
$$
\frac{|z_1-r|^2}{|1-rz_1|^2}  = 1 + \frac{(1-r^2)(|z_1|^2-1)}{|1-rz|^2}\leq 1+C(1-r).
$$

Next we look at $|z_1-1|\leq\eta$.  If $\eta$ is chosen small enough, the local description (iii)
is valid.   Hence if  $|z_1-1|<\eta$ and if $z\in b\Omega$ we 
have that 
\begin{align*}
\|z'\|^2 & = - |z_1-(1-\mu)|^2 + O(|z_1-a|^{k/2}) + \mu^2 \\
& = -|z_1-1|^2 - 2\mu Re(z_1-1) + O(|z_1-a|^{k/2}), \\ 
\end{align*}  
which gives that 
\begin{align*}
\frac{1}{\mu}\|z'\|^2 & = -\frac{1}{\mu}|z_1-1|^2 - 2Re(z_1-1) +  O(|z_1-a|^{k/2}) \\
& \leq -|z_1-1|^2 - 2Re(z_1-1) + O(|z_1-a|^{k/2}) \\
& = 1 - |z_1|^2 + O(|z_1-a|^{k/2}).
\end{align*}

Hence 
\begin{align*}
\frac{|z_1-r|^2 + \frac{1}{\mu}(1-r^2)\|z'\|^2}{|1-rz_1|^2} & = \frac{|z_1-r|^2 + (1-r^2)(1-|z_1|^2)}{|1-rz_1|^2} \\
& + \frac{(1-r^2)O(|z_1-1|^{k/2})}{ |1-rz_1|^2}\\
& = 1 + \frac{(1-r^2)O(|z_1-1|^{k/2})}{ |1-rz_1|^2}\\
& \leq 1+C\frac{(1-r^2) |1-rz_1|^{k/2}}{ |1-rz_1|^2}\\
& \leq 1+C_1 \frac{1-r}{|1-rz_1|^{2-(k/2)}}\\
& \leq 1+C_1 \frac{1-r}{(1-r)^{2-(k/2)}}\\
& \leq 1 + C_2(1-r)^{\frac{k-2}{2}}.
\end{align*}

\end{proof}

\subsection{Estimate (b)}

We define 
$$
B^{\mu}_{\eta,\tilde{\eta}}=\{|z_1-(1-\eta)|^2+\frac{\tilde\eta}{\mu}|z'|^2<\eta^2\}
$$
with constants $0<\eta\leq \tilde{\eta}<2\eta.$

\begin{lemma}\label{ballinside} We set 
$\tilde{\eta} = \begin{cases}
		\eta, & k=4 \\
		\frac{\eta}{1-C\eta},& k=3
	\end{cases}$\\
(i) If $k=4$ then $B^{\mu}_{\eta,\tilde{\eta}}\subset \Omega$ for all $\eta$ small enough\\
(ii) If $k=3$, 
and the constant $C>0$ is fixed large enough, then $B^{\mu}_{\eta,\tilde{\eta}}\subset \Omega$ for all $\eta$ small enough.
\end{lemma}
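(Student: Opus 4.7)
The plan is to verify $\rho(z)<\mu^2$ throughout $B^\mu_{\eta,\tilde\eta}$. I would write $\rho(z)=|z_1-(1-\mu)|^2+\|z'\|^2+R(z)$ using Lemma \ref{exposedpoint}(iii) (so $R(z)=O(|z_1-1|^2)+O(\|z-a\|^k)$), and introduce the \emph{sphere margin} $M(z):=\mu^2-|z_1-(1-\mu)|^2-\|z'\|^2$, so that the desired inequality becomes $M(z)>R(z)$.

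The first step would be to compute $M$ on the boundary of the ellipsoid. In the real variable $x=\mathrm{Re}(z_1)-1$, the ellipsoid equation reads $2\eta x+|z_1-1|^2+(\tilde\eta/\mu)\|z'\|^2=0$. Using this to eliminate $|z_1-1|^2$ in $M$, and then substituting once more for $-2x$, produces the clean identity
\[
M(z)=\tfrac{\mu-\eta}{\eta}|z_1-1|^2+\tfrac{\tilde\eta-\eta}{\eta}\|z'\|^2 \quad\text{on } \partial B^\mu_{\eta,\tilde\eta}.
\]
In particular $M\ge 0$, with equality only at $z=a$. A short monotonicity argument---pushing $x$ further negative at fixed $\mathrm{Im}(z_1)$ and $\|z'\|$ increases $M$ linearly in $|x-x_+|$ while only increasing $|z_1-1|^2$ quadratically---would then reduce $M>R$ on the open ellipsoid to $M>R$ on $\partial B^\mu_{\eta,\tilde\eta}\setminus\{a\}$.

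For the comparison on the boundary I would use $R(z)\le A|z_1-1|^2+B(|z_1-1|^2+\|z'\|^2)^{k/2}$. For $k=4$ with $\tilde\eta=\eta$, the coefficient $(\mu-\eta)/\eta\to\infty$ absorbs $A$ as $\eta\to 0$, and the ellipsoid constraint forces $|z_1-1|^2\gtrsim \|z'\|^4/\mu^2$ on $\partial B^\mu_{\eta,\eta}$ near $a$, so $((\mu-\eta)/\eta)|z_1-1|^2\gtrsim \|z'\|^4/(\mu\eta)$ swamps the $B\|z'\|^4$ part of $B\|z-a\|^4$; away from $a$, $M$ is of order $\mu\eta$ and easily beats the $O(\eta^2)$ error. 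For $k=3$ the obstacle is the $B\|z'\|^3$ term in $R$, not controlled by $((\mu-\eta)/\eta)|z_1-1|^2$ alone, and this is why the enlargement $\tilde\eta=\eta/(1-C\eta)$ is introduced: the extra $M$-contribution $((\tilde\eta-\eta)/\eta)\|z'\|^2\approx C\eta\|z'\|^2$ is tailored to absorb $B\|z'\|^3$ once $C$ is large enough. The hard part will be the coupled minimization of
$((\mu-\eta)/\eta)|z_1-1|^2+((\tilde\eta-\eta)/\eta)\|z'\|^2-B(|z_1-1|^2+\|z'\|^2)^{3/2}$
along the boundary, with $|z_1-1|$ and $\|z'\|$ tied by the ellipsoid equation; a direct calculus computation should show that the minimum is strictly positive once $C$ is fixed exceeding a suitable multiple of $B^2\mu$, completing the proof.
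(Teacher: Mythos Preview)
Your plan is correct and close in spirit to the paper's proof, but it diverges in two technical choices that are worth noting.

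\textbf{Reduction to the boundary.} You invoke a monotonicity argument in $x=\mathrm{Re}(z_1)-1$ at fixed $\mathrm{Im}(z_1),\|z'\|$. This can be made to work, but the bookkeeping is delicate since both $M$ and $R$ move as $x$ decreases (your ``quadratic vs.\ linear'' remark needs the extra observation that the change in $|z_1-1|^2$ is at most $4\eta|x-x_+|$, and similarly for $\|z-a\|^k$). The paper bypasses this entirely: $\rho$ is plurisubharmonic, so $\max_{\overline{B^\mu_{\eta,\tilde\eta}}}\rho$ is attained on the boundary, and strict inequality in the interior is immediate from the maximum principle. This is a one-line replacement for your monotonicity step.

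\textbf{The boundary comparison.} Your identity $M=\tfrac{\mu-\eta}{\eta}|z_1-1|^2+\tfrac{\tilde\eta-\eta}{\eta}\|z'\|^2$ is correct; it comes from eliminating $x$ via the ellipsoid equation. The paper instead eliminates $\|z'\|^2$, obtaining the equivalent form $M=2\mu|x|(1-\tfrac{\eta}{\tilde\eta})+|z_1-1|^2(\tfrac{\mu}{\tilde\eta}-1)$. The crucial difference is how the remainder is handled: the paper first proves $\|z-a\|^2\le C|x|$ on $\partial B^\mu_{\eta,\tilde\eta}$, hence $\|z-a\|^k\le C|x|^{k/2}$, so the entire error becomes a function of $x$ and $|z_1-1|$ alone. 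The comparison then reduces to the scalar inequality
\[
2\mu x\bigl(1-\tfrac{\eta}{\tilde\eta}\bigr)+|z_1-1|^2\bigl(1-\tfrac{\mu}{\tilde\eta}\bigr)+C_3|x|^{k/2}+C_4|z_1-1|^2\le 0,
\]
which for $k=4$ is immediate and for $k=3$ is settled by a case split on whether $|z_1-1|\ge \tilde C\eta^2$. Your route keeps $\|z'\|$ in play and uses instead $|z_1-1|^2\ge \|z'\|^4/(4\mu^2)$ (which follows from $|x|\ge \|z'\|^2/(2\mu)$); for $k=3$ this forces the two-term AM--GM balancing you allude to, yielding exactly the threshold $C\gtrsim B^2\mu$ you predict. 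Both arguments succeed; the paper's $\|z-a\|^k\lesssim |x|^{k/2}$ trick makes the endgame a bit more transparent and avoids the coupled minimization you flag as ``the hard part''.
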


\begin{proof}
For $\eta$ small enough, the ellipsoid $B^{\mu}_{\eta,\tilde{\eta}}$ is contained in the region where the local defining function $\rho$ is defined. Since $\rho$ is plurisubharmonic it suffices to show that $\rho\leq 0$ on
$bB^\mu_{\eta,\tilde{\eta}}.$ We translate coordinates, by setting $\tilde{z}_1=z_1-1$ and
$\tilde{z}=(\tilde{z}_1,z').$ We want to show that 

$$
\{2\eta Re(\tilde{z}_1) + |\tilde{z}_1|^2+\frac{\tilde\eta}{\mu} |z'|^2=0\}
$$
 is contained in the set 
$$
\{2\mu Re(\tilde{z}_1)+|\tilde{z}_1|^2+\|z'\|^2+ O(|\tilde z_1|^2)+O(\|\tilde{z}\|^k)\leq 0\}.
$$
Write $\tilde{z}_1=x_1+iy_1$.
On the boundary of the ellipsoid we have that 
$$
2\eta x_1 + x_1^2+y_1^2 + \frac{\tilde\eta}{\mu}\|z'\|^2=0\Leftrightarrow\frac{\mu}{\tilde\eta}y_1^2+\|z'\|^2=-\frac{\mu}{\tilde\eta}(2\eta x_1 + x_1^2)
$$
and consequently we get on the boundary of the ellipsoid that 
\begin{align*}
\|\tilde{z}\|^2 & = x_1^2+y_1^2 + \|z'\|^2 \leq x_1^2 + \frac{\mu}{\tilde\eta}y_1^2 + \|z'\|^2\\
& = x_1^2 - \frac{\mu}{\tilde\eta}(2\eta x_1 + x_1^2)\\
& = -x_1(-x_1 + \frac{\mu}{\tilde\eta}(2\eta + x_1))
\end{align*}

It follows that $\|\tilde{z}\|^2\leq C|x_1|$, and so 
\begin{equation}\label{sizez}
\|\tilde{z}\|^k\leq C |x_1|^{k/2}
\end{equation}

Consider again the boundary of the ellipsoid; we have 
$$
x_1^2+y_1^2+2\eta x_1+\frac{\tilde{\eta}}{\mu} \|z'\|^2=0
$$
Hence
$$
\|z'\|^2=-\frac{\mu}{\tilde{\eta}}(x_1^2+y_1^2+2\eta x_1)
$$

Therefore
\begin{align*}
2\mu x_1+|\tilde{z}_1|^2+\|z'\|^2 + O(|\tilde z_1|^2) +O(\|\tilde{z}\|^k)
& \leq 2\mu x_1 +|\tilde{z}_1|^2-\frac{\mu}{\tilde{\eta}}(|\tilde{z}_1|^2\\
& +2\eta x_1)+C_3 |x_1|^{k/2} + C_4|\tilde z_1|^2.
\end{align*}
using \eqref{sizez}. 
It suffices therefore to show that the right side is $\leq 0.$  This means: 
\begin{equation}\label{goal}
2\mu x_1(1-\frac{\eta}{\tilde\eta}) + |\tilde{z}_1|^2(1-\frac{\mu}{\tilde\eta}) + C_3|x_1|^{k/2} + C_4|\tilde z_1|^2 \leq 0.
\end{equation}

Observe that $\frac{1}{2}\leq \frac{\eta}{\tilde{\eta}}\leq 1$, so $1-\frac{\eta}{\tilde{\eta}}\geq 0.$
Morever $x_1\leq 0$ on the translated ellipse. Hence the first term in \eqref{goal} is $\leq 0.$ It suffices therefore
that 

\begin{equation}
|\tilde{z}_1|^2(1-\frac{\mu}{\tilde{\eta}}) + C|\tilde z_1|^{k/2}  \leq 0,
\end{equation}
where we merged the constants $C_3$ and $C_4$.
When $k=4$, this holds as soon as $\eta$ is small enough. When $k=3$, this holds when

$$C|\tilde{z}_1|^{3/2}  \leq \frac{\mu}{\tilde{\eta}}|\tilde{z}_1|^{1/2}|\tilde{z}_1|^{3/2} (1-\frac{\tilde{\eta}}{\mu})
$$

or 
$$C  \leq \frac{|\tilde{z}_1|^{1/2}}{\tilde{\eta}} (\mu-\tilde{\eta})
$$

This holds when $|\tilde{z}_1|\geq \tilde{C} \eta^2$ for large enough $\tilde{C}.$
To complete the proof we need to consider the case when $k=3$ and $|\tilde{z}_1|\leq \tilde{C} \eta^2$, 
and we go back to consider the full expression \eqref{goal}.   Since the sum 
$|\tilde z_1|^2(1-\frac{\mu}{\tilde\eta})+C_4|\tilde z_1|^2$ is negative when 
$\eta$ is small, it is enough to determine when 
$$
2\mu x_1(1-\frac{\eta}{\tilde{\eta}})+C_3|x_1|^{3/2} \leq 0.
$$
or equivalently when
$$
2\mu x_1(1-\frac{\eta}{\tilde{\eta}}) \leq  C_3x_1|x_1|^{1/2}\Leftrightarrow 2\mu (1-\frac{\eta}{\tilde{\eta}}) \geq C_3|x_1|^{1/2}.
$$
By our assumption we now have that $C_3|x_1|^{1/2}\leq C_3(\tilde C\eta^2)^{1/2}=C_5\eta$, and 
so we need that 
$$
2\mu(1-\frac{\eta}{\tilde\eta})\geq C_5\eta.
$$
Hence the choice $\tilde{\eta}=\frac{\eta}{1-\frac{C_5}{2\mu} \eta}$ works.

\end{proof}

Now let $\psi(z_1,z')=(z_1,\frac{1}{\sqrt\mu}z')$.  Then $\psi(B^\mu_{\eta,\tilde\eta})$
is the ellipsoid  
$$
B^1_{\eta,\tilde\eta}=\{|z_1-(1-\eta)|^2+\tilde\eta\|z'\|^2<\eta^2\}, 
$$

\begin{lemma}\label{ballinside3} Let $0<\eta,r<1$ and $\tilde{\eta}>0.$ 
If $z\in bB^1_{\eta,\tilde {\eta}},$ then
\bea
\|\phi_r(z_1,z')\|^2  & =1 + \frac{(1-r^2)|z_1-1|^2}{|1-rz|^2}  - \frac{(1-r^2)(1/{\tilde\eta})|z_1-1|^2}{|1-rz_1|^2}\\
&+ \frac{(1-r^2)2(1-\frac{\eta}{\tilde\eta})(Re(z_1)-1)}{|1-rz_1|^2} \\
\eea
\end{lemma}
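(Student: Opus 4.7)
The proof is essentially a direct computation, so my plan is to expand $\|\phi_r(z)\|^2$ carefully and then substitute the boundary equation for $B^1_{\eta,\tilde\eta}$. First I would write
$$
\|\phi_r(z_1,z')\|^2 = \frac{|z_1-r|^2}{|1-rz_1|^2} + \frac{(1-r^2)\|z'\|^2}{|1-rz_1|^2}.
$$
For the first summand I would invoke the standard Möbius identity
$$
\frac{|z_1-r|^2}{|1-rz_1|^2} = 1 + \frac{(1-r^2)(|z_1|^2-1)}{|1-rz_1|^2},
$$
which is verified by expanding $|z_1-r|^2 - |1-rz_1|^2 = (1-r^2)(|z_1|^2-1)$.

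Next I would use that on $bB^1_{\eta,\tilde\eta}$ we have $|z_1-(1-\eta)|^2 + \tilde\eta\|z'\|^2 = \eta^2$, so
$$
\|z'\|^2 = \frac{1}{\tilde\eta}\bigl(\eta^2 - |z_1-(1-\eta)|^2\bigr).
$$
Plugging this into the above gives
$$
\|\phi_r(z)\|^2 = 1 + \frac{(1-r^2)}{|1-rz_1|^2}\left[(|z_1|^2-1) + \frac{1}{\tilde\eta}\bigl(\eta^2 - |z_1-(1-\eta)|^2\bigr)\right].
$$
The remaining task is to rewrite the bracket purely in terms of $|z_1-1|^2$ and $\operatorname{Re}(z_1)-1$, which is the form demanded by the lemma.

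To do this I would use the two elementary expansions
$$
|z_1|^2 - 1 = |z_1-1|^2 + 2(\operatorname{Re}(z_1)-1),
$$
and
$$
\eta^2 - |z_1-(1-\eta)|^2 = -|z_1-1|^2 - 2\eta\bigl(\operatorname{Re}(z_1)-1\bigr).
$$
Inserting these into the bracket gives
$$
|z_1-1|^2\Bigl(1-\tfrac{1}{\tilde\eta}\Bigr) + 2\Bigl(1-\tfrac{\eta}{\tilde\eta}\Bigr)\bigl(\operatorname{Re}(z_1)-1\bigr),
$$
and distributing the factor $(1-r^2)/|1-rz_1|^2$ yields exactly the three-term expression in the statement. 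There is no real obstacle here: the entire argument is an algebraic identity, and the only bookkeeping step that requires care is keeping track of the signs when expanding $|z_1-(1-\eta)|^2 = |z_1-1|^2 + 2\eta\operatorname{Re}(z_1-1) + \eta^2$, since a sign error there would spoil the cancellation that produces the coefficient $1 - \eta/\tilde\eta$.
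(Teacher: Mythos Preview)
Your proposal is correct and follows essentially the same direct computation as the paper; the only cosmetic difference is that you invoke the M\"obius identity $\frac{|z_1-r|^2}{|1-rz_1|^2}=1+\frac{(1-r^2)(|z_1|^2-1)}{|1-rz_1|^2}$ at the outset, whereas the paper first substitutes $\|z'\|^2$ from the boundary equation and only afterwards expands $|z_1-r|^2-|1-rz_1|^2$, but the algebra and the key expansions of $|z_1|^2-1$ and $|z_1-(1-\eta)|^2$ in terms of $|z_1-1|^2$ and $\operatorname{Re}(z_1)-1$ are identical.
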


\begin{proof}

\begin{align*}
\|\phi_r(z_1,z')\|^2 & = \frac{|z_1-r|^2 + (1-r^2)|z'|^2}{|1-rz_1|^2}\\
& = \frac{|z_1-r|^2 + (1-r^2)(1/{\tilde\eta})(\eta^2- |z_1-(1-\eta)|^2)}{|1-rz_1|^2}\\
& = \frac{|z_1-r|^2 + (1-r^2)(1/{\tilde\eta})(\eta^2- |z_1-1|^2 - 2\eta Re(z_1-1)-\eta^2)}{|1-rz_1|^2}\\
& = \frac{|z_1-r|^2 + (1-r^2)(1/{\tilde\eta})(-2\eta Re(z_1-1)-|z_1-1|^2)}{|1-rz_1|^2}\\
& = 1 + \frac{|z_1-r|^2 - |1-rz_1|^2- (1-r^2)\frac{2\eta}{\tilde\eta}Re(z_1-1)}{|1-rz_1|^2} \\
& - \frac{(1-r^2)(1/{\tilde\eta})|z_1-1|^2}{|1-rz_1|^2}\\
& = 1 + \frac{|z_1|^2 -2rRe(z_1) + r^2 - (1 - 2rRe(z_1) + r^2|z_1|^2)}{|1-rz_1|}\\ 
& -\frac{ (1-r^2)\frac{2\eta}{\tilde\eta}Re(z-1)}{|1-rz_1|^2}
- \frac{(1-r^2)(1/{\tilde\eta})|z_1-1|^2}{|1-rz_1|^2}\\
& = 1 + \frac{(1-r^2)(|z_1|^2-\frac{2\eta}{\eta}Re(z_1)+(\frac{2\eta}{\tilde\eta}-1))}{|1-rz_1|^2}\\
& - \frac{(1-r^2)(1/{\tilde\eta})|z_1-1|^2}{|1-rz_1|^2}\\
& = 1 + \frac{(1-r^2)|z_1-1|^2}{|1-rz|^2}  - \frac{(1-r^2)(1/{\tilde\eta})|z_1-1|^2}{|1-rz_1|^2}\\
& + \frac{(1-r^2)2(1-\frac{\eta}{\tilde\eta})(Re(z_1)-1)}{|1-rz_1|^2} \\
\end{align*}

\end{proof}

\begin{lemma}\label{ballinside2}
Let $\psi(z)=(z_1,\frac{1}{\sqrt\mu}z')$.   Suppose that $0<\eta,r<1, 1-2\eta<r$ and $\tilde{\eta}>0.$
Then 
$\psi(\phi_r(B^\mu_{\eta,\tilde\eta}))$ contains the ball of radius
$$
\sqrt{1 - 2(1-r)\frac{1}{\tilde\eta} - 4|1-\frac{\eta}{\tilde\eta}|}.
$$

\end{lemma}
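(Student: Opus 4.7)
My plan is to estimate $\|\phi_r(z)\|^2$ along the boundary $bB^1_{\eta,\tilde\eta}$ using Lemma \ref{ballinside3}. First, since $\phi_r$ acts on the $z'$-coordinates by a scalar (depending only on $z_1$), it commutes with $\psi$; hence
\[
\psi(\phi_r(B^\mu_{\eta,\tilde\eta})) = \phi_r(\psi(B^\mu_{\eta,\tilde\eta})) = \phi_r(B^1_{\eta,\tilde\eta}).
\]
The hypothesis $1-2\eta<r$ is exactly the condition that $\phi_r^{-1}(0)=(r,0)$ lies in $B^1_{\eta,\tilde\eta}$, so the origin is in the image. Consequently the largest ball centered at $0$ inside $\phi_r(B^1_{\eta,\tilde\eta})$ has radius $\inf_{z \in bB^1_{\eta,\tilde\eta}} \|\phi_r(z)\|$, and it suffices to lower-bound this infimum.

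Lemma \ref{ballinside3} writes $\|\phi_r(z)\|^2-1$, on the boundary, as the sum of two correction terms with denominator $|1-rz_1|^2$, one proportional to $|z_1-1|^2(1-1/\tilde\eta)$ and one proportional to $(1-\eta/\tilde\eta)(\mathrm{Re}(z_1)-1)$. Applying the triangle inequality gives
\[
1 - \|\phi_r(z)\|^2 \leq \frac{(1-r^2)|z_1-1|^2}{|1-rz_1|^2}\Bigl|1-\tfrac{1}{\tilde\eta}\Bigr| + \frac{2(1-r^2)(1-\mathrm{Re}(z_1))}{|1-rz_1|^2}\Bigl|1-\tfrac{\eta}{\tilde\eta}\Bigr|,
\]
reducing the problem to bounding the two geometric ratios. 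For this the key inputs are the constraint $|z_1-1|^2 \leq 2\eta(1-\mathrm{Re}(z_1))$ obtained by expanding $|z_1-(1-\eta)|^2 \leq \eta^2$, and the decomposition $|1-rz_1|^2 = (1-r)^2 + 2r(1-r)(1-\mathrm{Re}(z_1)) + r^2|z_1-1|^2$, which yields convenient lower bounds on the denominator. With these I expect the estimates $\frac{(1-r^2)|z_1-1|^2}{|1-rz_1|^2} \leq 2(1-r)$ and $\frac{(1-r^2)(1-\mathrm{Re}(z_1))}{|1-rz_1|^2} \leq 2$, which combined with $|1-1/\tilde\eta| \leq 1/\tilde\eta$ (valid for $\tilde\eta \leq 1$, as in the intended applications) yield the claimed bound
\[
1 - \|\phi_r(z)\|^2 \leq \frac{2(1-r)}{\tilde\eta} + 4\Bigl|1-\tfrac{\eta}{\tilde\eta}\Bigr|.
\]

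The main obstacle is the first ratio estimate: $bB^1_{\eta,\tilde\eta}$ is tangent to $b\mathbb B^n$ at $(1,0,\ldots,0)$, and near that point $|1-rz_1|^2$ itself collapses to $(1-r)^2$, so the denominator lower bound and the boundary constraint on $|z_1-1|^2$ must be balanced carefully to keep the small parameters $1-r$ and $\eta$ at the correct scale. The remaining steps — commutation with $\psi$, containment of the origin, and the triangle-inequality split — are elementary.
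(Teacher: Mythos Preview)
Your proposal is correct and follows essentially the same route as the paper: both reduce via $\psi\circ\phi_r=\phi_r\circ\psi$ to lower-bounding $\|\phi_r(z)\|^2$ on $bB^1_{\eta,\tilde\eta}$, invoke Lemma~\ref{ballinside3}, and then estimate the two remaining ratios. The only bookkeeping difference is that the paper simply drops the positive term $\frac{(1-r^2)|z_1-1|^2}{|1-rz_1|^2}$ rather than grouping it with the $-1/\tilde\eta$ term (so it never needs your side-assumption $\tilde\eta\le 1$), and it bounds the ratios directly via $|z_1-1|\le|1-rz_1|$ and $1-r\le|1-rz_1|$ instead of through your decomposition of $|1-rz_1|^2$.
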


\begin{proof}
Since $1-2\eta<r,$ we have that $0\in \psi(\phi_r(B^\mu_{\eta,\tilde\eta})).$  Hence it suffices to show that
$ \|\psi(\phi_r)(z)\|^2 \geq 1 - 2(1-r)\frac{1}{\tilde\eta} - 4|1-\frac{\eta}{\tilde\eta}|$ on the boundary of $B^\mu_{\eta,\tilde\eta}.$
(This is nonempty if the expression on the right is nonnegative.)
Since $\psi\circ \phi_r=\phi_r\circ \psi$, it suffices to show that
$ \|\phi_r(z)\|^2 \geq 1 - 2(1-r)\frac{1}{\tilde\eta} - 4|1-\frac{\eta}{\tilde\eta}|$ on the boundary of $B^1_{\eta,\tilde\eta}.$
From the previous lemma we have that
\bea
\|\phi_r(z)\|^2 & \geq & 1 + \frac{(1-r^2)|z_1-1|^2}{|1-rz|^2}  - \frac{(1-r^2)(1/{\tilde\eta})|z_1-1|^2}{|1-rz_1|^2}\\
& + & \frac{(1-r^2)2(1-\frac{\eta}{\tilde\eta})(Re(z_1)-1)}{|1-rz_1|^2} \\
 & \geq & 1  - \frac{(1-r^2)(1/{\tilde\eta})|z_1-1|^2}{|1-rz_1|^2}
 - \frac{(1-r^2)2|1-\frac{\eta}{\tilde\eta}||Re(z_1)-1|}{|1-rz_1|^2}\\
& \geq & 1-\frac{(2(1-r))(1/{\tilde\eta})|rz_1-1|^2}{|1-rz_1|^2}
 - \frac{(2(1-r))2|1-\frac{\eta}{\tilde\eta}||rz_1-1|}{|1-rz_1|^2}\\
& \geq & 1-(1-r))(2/{\tilde\eta})
 - \frac{4|1-rz_1|)(1-\frac{\eta}{\tilde\eta})}{|1-rz_1|}\\
& \geq & 1-(1-r))(2/{\tilde\eta})
 - 4|1-\frac{\eta}{\tilde\eta}|\\
\eea

\end{proof}

We prove Theorem \ref{squeezing}

\begin{proof}
We will estimate the squeezing function at points $(r,0)$ when $r<1$ is close to $1.$
That this gives the uniform constant claimed in  Theorem 1.1, follows from the
dependence on $p$ as $p$ varies over the boundary of the original domain. In particular, the constants
in our estimates can be chosen independently of the point $p$, and 
the radial lines
will foliate a neighborhood of the boundary so that we get an estimate for all points near the boundary.
The map $\psi\circ \phi_r$ maps $(r,0)$ to the origin. We estimate the image of $\Omega.$

It follows from Lemma \ref{outsideestimate}  that there exists a constant $C>0$ such that for $w\in b\phi_r(\Omega)$ we have 
that $|w_1|^2+\frac{1}{\mu}\|w'\|^2\leq 1 + C(1-r)^{\frac{k-2}{2}}$.
Since the left side is plurisubharmonic, the same estimate holds by the maximum principle on 
$\overline{\phi_r(\Omega)}.$
Suppose that $(z_1,z')\in \psi(\overline{\phi_r(\Omega)}).$ Then $(z_1,z')=\psi(w_1,w')=(w_1,\frac{1}{\sqrt{\mu}}w')$
for some $w\in \overline{\phi_r(\Omega)}.$
Hence $\|z\|^2=|w_1|^2+\frac{1}{\mu}\|w'\|^2\leq 1 + C(1-r)^{\frac{k-2}{2}}.$
It follows that $\psi(\phi_r(\Omega))$ is contained in the ball centered at the origin of radius
$1 + C(1-r)^{\frac{k-2}{2}}$.

We next estimate the radius of the largest ball contained in $\psi(\phi_r(\Omega)).$
By Lemma \ref{ballinside} we have ellipsoids  $B^{\mu}_{\eta,\tilde{\eta}}=\{|z_1-(1-\eta)|^2+\frac{\tilde\eta}{\mu}|z'|^2<\eta^2\}$ contained in $\Omega$ for certain $\eta, \tilde{\eta}:$ We set

$\tilde{\eta} = \begin{cases}
		\eta, & k=4 \\
		\frac{\eta}{1-C\eta},& k=3.
	\end{cases}$\\
(i) If $k=4$ we have that $B^{\mu}_{\eta,\tilde{\eta}}\subset \Omega$ for all $\eta$ small enough, and \\
(ii) if $k=3$, 
and the constant $C>0$ is fixed large enough, then $B^{\mu}_{\eta,\tilde{\eta}}\subset \Omega$ for all $\eta$ small enough.
We can then estimate instead the largest ball contained in $\psi(\phi_r(B^{\mu}_{\eta,\tilde{\eta}})).$

We use Lemma \ref{ballinside2}:
   Suppose that $0<\eta,r<1, 1-2\eta<r$ and $\tilde{\eta}>0.$
Then 
$\psi(\phi_r(B^\mu_{\eta,\tilde\eta}))$ contains the ball of radius
$$
\sqrt{1 - 2(1-r)\frac{1}{\tilde\eta} - 4|1-\frac{\eta}{\tilde\eta}|}.
$$

We deal first with the case $k=4.$ Then we assume that $1-2\eta<r$ and $\tilde{\eta}=\eta.$ It follows that

$\psi(\phi_r(\Omega)) \supset \psi(\phi_r\left(B^\mu_{\eta,\tilde\eta}\right)
\supset \mathbb B(0,\sqrt{1-2(1-r)\frac{1}{\tilde{\eta}}}).$
We choose a fixed $\eta$, and let $r\rightarrow 1.$ We then get that for a fixed constant $C'$,
$\psi(\phi_r(\Omega))\supset \mathbb B(0,1-C'(1-r)).$
Hence we have shown that in the case $k=4, \frac{k-2}{2}=1,$
$$
\mathbb B(0,1-C'(1-r))\subset \psi(\phi_r(\Omega)) \subset \mathbb B(0,1+C(1-r)).
$$
Composing with the map $\lambda(z)=\frac{z}{1+C(1-r)}$ we obtain that
$\lambda(\psi(\phi_r(r,0)))=0$ and that
$$
\mathbb B(0,\frac{1-C'(1-r)}{1+C(1-r)}) \subset \lambda(\psi(\phi_r(\Omega))) \subset \mathbb B(0,1).
$$

Hence it follows that the squeezing function at $(r,0)$ is at least $1-C''(1-r).$
Since the defining function  $\delta(z)=-(1-r)+\mathcal O(1-r)^2$ for $z=(r,0)$ and $r$ close to $1$, we obtain Theorem 1.1 in the case when $k=4.$

It remains to do the case $k=3.$

It follows as above that $\psi(\phi_r(\Omega))$ is contained in the ball centered at the origin of radius
$1 + C(1-r)^{\frac{k-2}{2}}=1 + C(1-r)^{\frac{1}{2}}$.

As above we 
suppose that $0<\eta,r<1, 1-2\eta<r$, and we have that
$\psi(\phi_r(B^\mu_{\eta,\tilde\eta}))$ contains the ball of radius
$$
\sqrt{1 - 2(1-r)\frac{1}{\tilde\eta} - 4|1-\frac{\eta}{\tilde\eta}|}.
$$
We have that $\frac{\tilde\eta}{\eta}=1-C\eta$, and so it follows that

\bea
\psi(\phi_r(\Omega)) & \supset & \psi(\phi_r(B^\mu_{\eta,\tilde\eta})\\
& \supset & \mathbb B\left(0,\sqrt{1-2(1-r)\frac{1}{\tilde{\eta}}-4C\eta}\right)\\
& \supset  & \mathbb B\left(0,\sqrt{1-2(1-r)\frac{1}{\eta}-4C\eta}\right).\\
\eea

In this case, we let $\eta$ depend on $r.$ Set $\eta=\sqrt{1-r}$.
Then $r=1-\eta^2>1-2\eta$  if $r$ is close enough to $1.$
We then get that 

\bea
\psi(\phi_r(\Omega)) 
& \supset  & \mathbb B\left(0,\sqrt{1-2(1-r)\frac{1}{\eta}-4C\eta}\right)\\
& = & \mathbb B\left(0,\sqrt{1-2(1-r)\frac{1}{\sqrt{1-r}}-4C\sqrt{1-r}}\right)\\
& = & \mathbb B\left(0,\sqrt{1-(2+4C)\sqrt{1-r}}\right)\\
& \supset & \mathbb B\left(0,1-(2+4C)\sqrt{1-r}\right)\\
\eea

Now it follows by the same scaling type argument with a map $\lambda$ that
we get the desired lower bound for the squeezing function in the case $k=3.$

\end{proof}

\section{An example}

Let $\Omega$ be the domain $\Omega:=\mathbb B^n\setminus\frac{1}{2}\overline{\mathbb B}^n$.
We will show that $S_\Omega(z)$ cannot approach 1 faster than $1-C\mathrm{dist}(z,b\Omega)$.
By abuse of notation we set $r=(r,0,...,0), 0<r<1$ and we set $a=(1/2,0,...,0)$.  Then the Kobayashi
distance with respect to $\mathbb B^n$ from $a$ to $r$ is $\frac{1}{2}(\log(\frac{1+r}{1-r}) - \log(3))$.
Now let $f:\Omega\rightarrow\mathbb B^n$ be an injective holomorphic map with $f(r)=0$.
Then $f$ extends to a holomorphic map $\tilde f:\mathbb B^n\rightarrow\mathbb B^n$, so 
by the decreasing property of the Kobayashi metric we have that the Kobayashi 
distance between $f(r)$ and $f(a)$ is less that $\frac{1}{2}\log(\frac{1+r}{1-r})$.  It 
follows that $S_{\Omega,f}(r)\leq r$.

\bibliographystyle{amsplain}

\end{document}